\documentclass[11pt,twoside]{article}

\setlength{\textwidth}{\paperwidth}
\addtolength{\textwidth}{-8cm}
\setlength{\textheight}{\paperheight}
\addtolength{\textheight}{-4cm}
\addtolength{\textheight}{-1.1\headheight}
\addtolength{\textheight}{-\headsep}
\addtolength{\textheight}{-\footskip}
\setlength{\oddsidemargin}{0.5cm}
\setlength{\evensidemargin}{0.5cm}

%%%%% NEW MATH DEFINITIONS %%%%%

\usepackage{amsmath,amsfonts,bm}
% Mark sections of captions for referring to divisions of figures

% Highlight a newly defined term

% Figure reference, lower-case.

% Figure reference, capital. For start of sentence

% Section reference, lower-case.

% Section reference, capital.

% Reference to two sections.

% Reference to three sections.

% Reference to an equation, lower-case.
% \def\eqref#1{equation~(\ref{#1})}
% Reference to an equation, upper case

% A raw reference to an equation---avoid using if possible

% Reference to a chapter, lower-case.

% Reference to an equation, upper case.

% Reference to a range of chapters

% Reference to an algorithm, lower-case.

% Reference to an algorithm, upper case.

% Reference to a part, lower case

% Reference to a part, upper case

\def\1{\bm{1}}

% Random variables

% rm is already a command, just don't name any random variables m

% Random vectors

% Elements of random vectors

% Random matrices

% Elements of random matrices

% Vectors

% Elements of vectors

% Matrix

% Tensor
\DeclareMathAlphabet{\mathsfit}{\encodingdefault}{\sfdefault}{m}{sl}
\SetMathAlphabet{\mathsfit}{bold}{\encodingdefault}{\sfdefault}{bx}{n}

% Graph

% Sets

% Don't use a set called E, because this would be the same as our symbol
% for expectation.

% Entries of a matrix

% entries of a tensor
% Same font as tensor, without \bm wrapper

% The true underlying data generating distribution

% The empirical distribution defined by the training set

% The model distribution

% Stochastic autoencoder distributions

 % Laplace distribution

% Wolfram Mathworld says $L^2$ is for function spaces and $\ell^2$ is for vectors
% But then they seem to use $L^2$ for vectors throughout the site, and so does
% wikipedia.

 % See usage in notation.tex. Chosen to match Daphne's book.

\usepackage[utf8]{inputenc} % allow utf-8 input
\usepackage[T1]{fontenc}    % use 8-bit T1 fonts
       % simple URL typesetting
\usepackage{booktabs}       % professional-quality tables
\usepackage{amsfonts}       % blackboard math symbols
\usepackage{nicefrac}       % compact symbols for 1/2, etc.
\usepackage{microtype}      % microtypography

\usepackage{subfigure}
\usepackage{epsf}
\usepackage{epsfig}
\usepackage{fancyhdr}
\usepackage{graphics}
\usepackage{graphicx}
\usepackage{psfrag}
\usepackage{fullpage}
\usepackage{pdfpages}

\usepackage{url}% for url's in bib
% for theorem hyperlink colors
\usepackage[colorlinks,linkcolor=magenta,citecolor=blue, pagebackref=true]{hyperref}
\renewcommand*{\backrefalt}[4]{%
    \ifcase #1 \footnotesize{(Not cited.)}%
    \or        \footnotesize{(Cited on page~#2.)}%
    \else      \footnotesize{(Cited on pages~#2.)}%
    \fi}
% hyperref makes hyperlinks in the resulting PDF.
% If your build breaks (sometimes temporarily if a hyperlink spans a page)
% please comment out the following usepackage line and replace
\usepackage{color}

\usepackage{amsthm}
\usepackage{amsmath}
\usepackage{amssymb,bbm}
\usepackage{caption}
\usepackage{algorithmic}
\usepackage{algorithm}
\usepackage{textcomp}
\usepackage{siunitx}
\usepackage{wrapfig}
\usepackage{algorithmic}
\usepackage{algorithm}
\usepackage{mathrsfs}  
\usepackage{multirow}
\usepackage{multicol}% colors

\newcommand{\kl}{\textnormal{KL}}

\newtheorem{assumption}{Assumption}
%%%%%%%%%%%%%%%%%%%%%%%%%%%%%%%%%%%%%%%%%%%%%%%%%
%\setlength{\textwidth}{\paperwidth}
%\addtolength{\textwidth}{-6cm}
%\setlength{\textheight}{\paperheight}
%\addtolength{\textheight}{-4cm}
%\addtolength{\textheight}{-1.1\headheight}
%\addtolength{\textheight}{-\headsep}
%\addtolength{\textheight}{-\footskip}
%\setlength{\oddsidemargin}{0.5cm}
%\setlength{\evensidemargin}{0.5cm}

% baselinestretch trick to save some space
 %   \renewcommand{\baselinestretch}{0.99}

%%%%%%%%%%%%%%%%%%%%%%%%%%%%%%%%%%%%%%%%%%%%%%%%%
\newtheorem{lemma}{Lemma}

\newtheorem{theorem}{Theorem}

\newtheorem{definition}{Definition}
%opening
%\newtheorem{assumption}{Assumption}
%\newtheorem{conjecture}{Conjecture}

%\renewcommand{\algorithmicrequire}{\textbf{Input:}}
%\renewcommand{\algorithmicensure}{\textbf{Output:}}

\def\RR{\mathbb{R}}

\def\NN{\mathbb{N}}

\def\FF{\mathbb{F}}

\def\XX{\mathbb{X}}

\def\FF{\mathbb{F}}

% Attempt to make hyperref and algorithmic work together better:

%%%%%%%%%%%%%%%%%%%%%%%%%%%%%%%%%%%%%%%%%%%%%%%%%

% Use the following line for the initial blind version submitted for review:
% \usepackage{icml2021}
% \DeclareMathOperator*{\argmax}{arg\,max}  % in your preamble
% \DeclareMathOperator*{\argmin}{arg\,min}  % in your preamble 
% If accepted, instead use the following line for the camera-ready submission:
%\usepackage[accepted]{icml2021}

% The \icmltitle you define below is probably too long as a header.
% Therefore, a short form for the running title is supplied here:
% \icmltitlerunning{BoMb-OT: On Batch of Mini-batches Optimal Transport}

\begin{document}

\begin{center}

{\bf{\LARGE{On A Necessary Condition For Posterior Inconsistency:\\ New Insights From A Classic Counterexample}}}
  
\vspace*{.2in}
{\large{
\begin{tabular}{ccc}
Nicola Bariletto & Stephen G. Walker
\end{tabular}
}}

\vspace*{.2in}

\begin{tabular}{c}
The University of Texas at Austin
\end{tabular}

\today

% \vspace*{.2in}

\begin{abstract}
The consistency of posterior distributions in density estimation is at the core of Bayesian statistical theory. Classical work established sufficient conditions, typically combining KL support with complexity bounds on sieves of high prior mass, to guarantee consistency with respect to the Hellinger distance. Yet no systematic theory explains a widely held belief: under KL support, Hellinger consistency is exceptionally hard to violate. This suggests that existing sufficient conditions, while useful in practice, may overlook some key aspects of posterior behavior. We address this gap by directly investigating what must fail for inconsistency to arise, aiming to identify a substantive necessary condition for Hellinger inconsistency. Our starting point is Andrew Barron’s classical counterexample, the only known violation of Hellinger consistency under KL support, which relies on a contrived family of oscillatory densities and a prior with atoms. We show that, within a broad class of models including Barron's, inconsistency requires persistent posterior concentration on densities with exponentially high likelihood ratios. In turn, such behavior demands a prior encoding implausibly precise knowledge of the true, yet unknown data-generating distribution, making inconsistency essentially unattainable in any realistic inference problem. Our results confirm the long-standing intuition that posterior inconsistency in density estimation is not a natural phenomenon, but rather an artifact of pathological prior constructions.
\end{abstract}

\end{center}

\section{Introduction}

This article revisits the foundational issue of posterior consistency in Bayesian density estimation. This has long been a central topic in the theory of asymptotic statistics: the earliest perspective, dating back to Joseph Doob’s foundational work on martingales \cite{doob1949application}, established that the posterior concentrates around the data-generating distribution provided the latter is itself drawn from the prior. A frequentist “what if” validation approach (which we take up here) later became prominent, studying the asymptotic behavior of the posterior when the data are iid from a fixed, unknown distribution. In this setting, Lorraine Schwartz’s breakthrough \cite{schwartz1964consistency} showed that the posterior concentrates in arbitrary weak neighborhoods of the true distribution as long as the latter lies in the Kullback–Leibler (KL) support of the prior (see Definition~\ref{def:KL_support}). Later work showed that inconsistency may arise when this support condition fails \cite{diaconis1986consistency,diaconis1986inconsistent} and extended Schwartz’s result to stronger topologies, for instance as metrized by the Hellinger distance, which are more suitable for density estimation \cite{barron1988exponential, ghosal1999posterior, walker2004newapproaches}. This line of research focused on sufficient conditions, usually by combining KL support with prior mass bounds on high-complexity regions of the parameter space, often in the spirit of nonparametric (sieve) maximum likelihood techniques, to constructively ensure Hellinger consistency.

In this work, we reverse the thought process and ask instead: what must go wrong for the posterior distribution to be inconsistent in the Hellinger sense, assuming KL support (and thus weak consistency) as a minimal well-specification requirement? We view this shift as a way to more effectively identify the substantive obstacles to Hellinger consistency. Once these obstacles are pinned down, it is in turn natural to rule them out directly rather than to indirectly search for conditions implying consistency. As a key step toward this goal, we examine Andrew Barron’s counterexample \cite{barron1988exponential, barron1999consistency} to Hellinger consistency---the only one known, to this day, under the KL support assumption. Our aim is to characterize precisely the mechanism that drives inconsistency in Barron's example and within a broader class of models; once that mechanism is identified, we will proceed to uncover its necessary reliance on an artificial alignment between the prior, the likelihood model, and the data-generating distribution. As a result, we will arrive at the conclusion that, notwithstanding the formal validity of the counterexample, the observed failure should not be a source of practical concern, as the alignment it requires boils down to the modeler possessing knowledge of the unknown data-generating density and using it to construct a tailored, pathological prior.

Taking a step back, we believe it to be a revealing fact that, despite an extensive literature and sophisticated technical tools, only one counterexample is known to produce Hellinger inconsistency under KL support. This rarity itself signals the pathological nature of Hellinger inconsistency, yet within the current state of the literature, this remains only an intuition. The most likely reason for the absence of formal work on this aspect lies in the ingenious yet highly intricate construction of the counterexample. Barron’s model involves a countable family of discontinuous, oscillatory densities that converge weakly, but not in Hellinger distance, to the uniform distribution on $[0,1)$. By assigning carefully chosen prior mass to these densities, the posterior concentrates away from the truth in Hellinger distance while still placing mass near it in the weak topology, in agreement with Schwartz’s theorem. The construction is clearly contrived and leaves, in a non-transparent way, a long list of potential culprits for inconsistency: the prior contains atoms, some densities in its support are discontinuous with infinite KL divergence from the uniform, and the family exhibits a special oscillatory step-function structure.

Therefore, a natural question arises: which of these features is actually responsible for Hellinger inconsistency? Answering this would not only clarify key theoretical aspects of posterior consistency and its failure, but also guide the exclusion of problematic features in model design. Our contribution in this direction is twofold. First, we show that the mechanism behind inconsistency lies in persistent posterior concentration on densities that achieve exponentially high likelihood ratio values in a certain arbitrarily small range. In particular, we prove in Theorem~\ref{thm:beta_boundedness} that this phenomenon is a necessary consequence of Hellinger inconsistency for a broad class of models whose normalized likelihood ratio is bounded a posteriori (see Definition~\ref{def:beta_bounded}), of which Barron’s example is an extreme instantiation. Second, in our main Theorem~\ref{thm:alpha_beta_inconsistency} we demonstrate that this behavior implies a particular alignment between the prior, the likelihood model, and the true distribution. Our analysis shows that such an alignment is highly artificial and essentially unattainable in any model of genuine statistical relevance, as it requires encoding precise knowledge of the unknown data-generating process into a contrived prior specification.

In summary, while the existence of Barron’s ingenious counterexample may appear to justify worries about posterior inconsistency, we demonstrate that the observed failure hinges on an unnatural link between the prior, the model, and the true distribution. By formalizing this failure through new notions of inconsistency (see Definitions~\ref{def:gamma_inconsistency} and \ref{def:alpha_beta_inconsistency}), we demonstrate that it may arise only via design choices that are absent from---indeed infeasible in---standard modeling practice. Overall, the main conceptual contribution of our analysis is to bring fresh evidence that, although Bayesian inconsistency in density estimation may occur in theory through Barron-type constructions, it should not be a practical concern given that it requires a necessary condition that cannot be met in realistic modeling scenarios.

The rest of the article is organized as follows. Section~\ref{sec:notation} introduces the notation and framework used throughout the paper. In Section~\ref{sec:barron}, we describe Barron’s counterexample and uncover a specific form of inconsistency it satisfies. In Section~\ref{sec:necessary_condition}, we relate this to Hellinger inconsistency by proposing a necessary condition which is impossible to meet without knowledge of the true distribution and strong constraints on the prior. Section~\ref{sec:conclusion} concludes the article.

\section{Formal setup and preliminary results}\label{sec:notation}

Consider a family of probability densities $\FF$ with respect to a $\sigma$-finite measure $\lambda$ on a separable metric space $\XX$ with Borel $\sigma$-algebra $\mathscr X$, and equip $\FF$ with the Borel $\sigma$-algebra $\mathscr F$ generated by the topology of weak convergence of probability measures. Let $X_1, X_2, \dots$ be iid random elements from some probability measure $F_\star$ on $(\XX, \mathscr X)$ with density $f_\star$,\footnote{Unless stated otherwise, all probability statements (e.g., “almost surely,” “with positive probability,” etc.) are understood to be under the infinite product measure $F_\star^\infty$ corresponding to the data-generating distribution $F_\star$.} and let $\Pi$ be a prior probability measure on $(\FF, \mathscr F)$. In this setting, Bayes' theorem gives rise to the sequence of posterior distributions
\begin{equation}\label{eq:def_posterior}
    \Pi(\mathrm df\mid  X_{1:n}) = \frac{R_n(f)\, \Pi(\mathrm df)}{\int_\FF R_n(g)\,\Pi(\mathrm dg)}, \quad n\in\NN,
\end{equation}
where $R_n(f):=\prod_{i=1}^n f(X_i)/f_\star(X_i)$ denotes the likelihood ratio at $f$. This general framework accommodates both parametric and nonparametric models (depending on the dimension of $\FF$) on broad classes of sample spaces.

Let $\kl(f, g):= \int_\XX f\ln(f/g)\,\mathrm d\lambda$ for any $f, g \in \FF$. A key condition for posterior consistency, first introduced by \cite{schwartz1964consistency}, is the following.

\begin{definition}\label{def:KL_support}
    A density $f_\star \in \FF$ belongs to the KL support of the prior $\Pi$ if $\Pi(\left\{f \in \FF : \kl(f_\star, f) < \delta\right\}) > 0$  for all $\delta > 0$.
\end{definition}

The simple assumption that $f_\star$ lies in the KL support of $\Pi$ has significant implications for the asymptotic behavior of the posterior distribution as $n \to \infty$. In particular, it implies that the posterior is \emph{weakly consistent}, that is, that it concentrates in any weak neighborhood of $F_\star$ with probability one. When the sample space $\XX$ is a subset of $\RR^d$, this translates to concentration of the posterior around densities with cumulative distribution functions that are close to the true one. Furthermore, the KL support assumption yields a useful lower bound on the denominator of \eqref{eq:def_posterior}, as shown for instance in \cite[lemma 4]{barron1999consistency}. We summarize both results in the lemma below (without proof).

\begin{lemma}\label{lem:schwartz}
    Assume that $f_\star$ lies in the KL support of $\Pi$. Then, for any weak neighborhood $U$ of $F_\star$, $\lim_{n\to\infty} \Pi(U^c \mid X_{1:n}) = 0$ almost surely. Moreover, $\int_\FF R_n(f)\,\Pi(\mathrm df) \geq e^{-\tau n}$ ultimately almost surely, for all $\tau > 0$.
\end{lemma}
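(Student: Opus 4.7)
Both assertions are classical results going back to \cite{schwartz1964consistency}; my plan is to prove the lower bound on the marginal likelihood first and then feed it into a standard testing argument for weak consistency. The key ingredients are Jensen's inequality, the strong law of large numbers, and the existence of uniformly exponentially consistent tests for weak neighborhoods of $F_\star$.

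For the second assertion, fix $\tau > 0$ and set $K_\tau := \{f \in \FF : \kl(f_\star, f) < \tau/2\}$. The KL support assumption guarantees $c_\tau := \Pi(K_\tau) > 0$, so $\mu_\tau(\mathrm df) := \mathbf{1}_{K_\tau}(f)\,\Pi(\mathrm df)/c_\tau$ is a probability measure on $\FF$. Restricting $\Pi$ to $K_\tau$ and applying Jensen's inequality to the concave function $\log$ gives
\[
\int_\FF R_n(f)\,\Pi(\mathrm df) \;\geq\; c_\tau \exp\!\left(\int_\FF \log R_n(f)\,\mu_\tau(\mathrm df)\right).
\]
I would then use Fubini to rewrite the exponent as $\sum_{i=1}^n g_\tau(X_i)$, where $g_\tau(x) := \int \log(f(x)/f_\star(x))\,\mu_\tau(\mathrm df)$; the required integrability comes from the elementary inequality $\mathbb{E}_{F_\star}[\log^+(f/f_\star)] \leq 1$ together with $\kl(f_\star, f) < \tau/2$ on $K_\tau$. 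The strong law of large numbers then yields $\frac{1}{n}\sum_i g_\tau(X_i) \to -\int \kl(f_\star, f)\,\mu_\tau(\mathrm df) \geq -\tau/2$ almost surely, so $\liminf_n \frac{1}{n}\log \int_\FF R_n\,\mathrm d\Pi \geq -\tau/2$ almost surely. Intersecting the corresponding full-measure events along a sequence $\tau_k \downarrow 0$ upgrades this to a single almost-sure event on which $\int R_n\,\mathrm d\Pi \geq e^{-\tau n}$ eventually, for every $\tau > 0$.

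For weak consistency, I would follow the classical testing route. For any weak neighborhood $U$ of $F_\star$, a standard construction based on finitely many bounded continuous functions separating $F_\star$ from $U^c$ and Hoeffding-type concentration produces measurable tests $\phi_n : \XX^n \to [0,1]$ with $\mathbb{E}_{F_\star}[\phi_n] \leq e^{-cn}$ and $\sup_{F \in U^c}\mathbb{E}_F[1-\phi_n] \leq e^{-cn}$ for some $c = c(U) > 0$. Decomposing $\Pi(U^c \mid X_{1:n}) \leq \phi_n + (1-\phi_n)\Pi(U^c \mid X_{1:n})$, Borel--Cantelli makes the first term vanish almost surely. Fubini then gives $\mathbb{E}_{F_\star}\bigl[(1-\phi_n)\int_{U^c} R_n\,\mathrm d\Pi\bigr] \leq e^{-cn}$, so a further application of Markov and Borel--Cantelli bounds the numerator of the second term by $e^{-cn/2}$ eventually almost surely. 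Dividing by the denominator, which is at least $e^{-cn/4}$ by the first part applied with $\tau = c/4$, yields exponential decay of $\Pi(U^c \mid X_{1:n})$ to zero.

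The main delicate point is the Fubini interchange in the marginal-likelihood bound: one must verify that $g_\tau$ is well-defined and integrable under $F_\star$, which follows from the uniform estimate $\mathbb{E}_{F_\star}|\log(f/f_\star)| \leq 1 + \kl(f_\star, f)$ valid throughout $K_\tau$. The remainder is a routine assembly of standard components, which explains why the paper cites the lemma without proof.
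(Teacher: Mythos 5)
Your proof is correct and follows the standard route that the paper itself relies on by citation rather than reproof: Jensen's inequality on the KL neighborhood $K_\tau$ plus the SLLN (with the Fubini interchange justified exactly as you indicate) for the evidence bound, and Schwartz's uniformly exponentially consistent tests for weak neighborhoods combined with that bound for weak consistency. No gaps; the argument matches the classical proofs in \cite{schwartz1964consistency} and \cite[lemma~4]{barron1999consistency} that the lemma summarizes.
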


If the goal of inference is to estimate the density $f_\star$ rather than the distribution $F_\star$, then weak neighborhoods are not appropriate for assessing consistency. This is because weak convergence of distributions does not, in general, imply convergence of the corresponding densities in any meaningful sense, as distributions with increasingly oscillatory densities may still converge weakly to $F_\star$. To address this, a commonly used stronger topology is that induced by the Hellinger metric $d_h$:  for any $f, g \in \FF$, $d_h^2(f,g) := \int_\XX (f^{1/2} - g^{1/2})^2\, \mathrm d\lambda$. \cite{barron1999consistency, ghosal1999posterior, walker2004newapproaches} were among the first to provide sufficient conditions for \emph{Hellinger consistency}, that is,
\begin{equation*}
    \lim_{n\to\infty} \Pi\left(\left\{f \in \FF : d_h(f, f_\star) > \varepsilon \right\} \mid X_{1:n} \right) = 0
\end{equation*}
almost surely for all $\varepsilon > 0$, by placing suitable constraints on the prior mass assigned to high-complexity regions (sieves) of $\FF$.

\section{Barron's counterexample}\label{sec:barron}

In addition to providing a proof of Hellinger posterior consistency based on sieve upper-bracketing entropy conditions, \cite{barron1999consistency} also presented a counterexample, originally introduced in a solo technical report by Barron \cite{barron1988exponential}, demonstrating that Hellinger consistency may fail even when the KL support condition of Definition~\ref{def:KL_support} (and thus weak consistency) holds. This example effectively shows that KL support is technically not sufficient to guarantee Hellinger consistency. We now describe the counterexample and offer a new perspective on the mechanism driving inconsistency. As mentioned in the introduction, because this is the only known counterexample of Hellinger inconsistency under KL support, understanding what causes inconsistency here, and its consequences, is a crucial step toward analyzing the roots of inconsistency in broader classes of models, which we address in the next section.

Let $\XX = [0,1)$, endowed with the Euclidean topology and its Borel $\sigma$-algebra, and let $\lambda$ denote the Lebesgue measure. Define $\FF := \FF_0 \cup \FF_\triangle$ and let $\Phi$ denote the standard normal cumulative distribution function, where
\begin{equation*}
    \FF_0 := \{f_\theta : \theta \in [0,1]\}, \quad f_\theta(x) := \exp\left\{-\theta + \sqrt{2\theta}\,\Phi^{-1}(x)\right\},
\end{equation*}
and where $\FF_\triangle := \bigcup_{N \in \NN} \FF_N$ is constructed as follows. Let
\begin{equation*}
    P_N := \left\{ \left[\frac{j}{2N^2},\, \frac{j+1}{2N^2} \right) : j = 0,1,\dots,2N^2 - 1 \right\}
\end{equation*}
denote the partition of $\XX$ into $2N^2$ consecutive intervals of equal length. Then, $\FF_N$ is the collection of all $\binom{2N^2}{N^2}$ densities that take constant value $2$ on exactly $N^2$ of the intervals in $P_N$, and value $0$ on the remaining $N^2$ intervals.

Assume that the true density is $f_\star = f_0$, the uniform density on $[0,1)$. The prior $\Pi$ is chosen to place half of its mass on $\FF_0$ according to
%
%\begin{equation*}
 $   \Pi_0(\mathrm d\theta) \propto e^{-1/\theta}\,1_{[0,1]}(\theta)\,\mathrm d\theta,$
%\end{equation*}
%
and the remaining half on $\FF_\triangle$, via
\begin{equation*}
    \Pi_\triangle(\mathrm df) \propto \sum_{N \in \NN} \frac{1}{N^2} \sum_{g \in \FF_N} \frac{1}{\binom{2N^2}{N^2}}\,\delta_g(\mathrm df).
\end{equation*}
That is, the prior splits its mass evenly: one half is assigned diffusely to the set of continuous densities $\FF_0$, while the other half is assigned to the oscillatory densities in $\FF_\triangle$ by (i) allocating mass proportional to $N^{-2}$ to each set $\FF_N$, and (ii) distributing that mass uniformly across the members of $\FF_N$.

\subsection{Inconsistency}

The key result of \cite{barron1988exponential} is that, although the prior diffusely assigns mass within $\FF_0$ in such a way that the KL support condition is satisfied at $f_\star$ (thus guaranteeing weak consistency), the posterior distribution is Hellinger inconsistent. Specifically, the posterior mass assigned to $\FF_\triangle$, whose elements remain at a constant positive Hellinger distance from the uniform density, accumulates arbitrarily close to 1 infinitely often, almost surely. This is possible because the densities in $\FF_\triangle$ are able to approximate the uniform distribution weakly while remaining far in Hellinger distance from the uniform density. When paired with a carefully constructed prior that places enough mass on fast-oscillating densities within $\FF_\triangle$—which can closely track the weakly converging empirical distribution—the result is Hellinger inconsistency even in the presence of weak consistency.

As it is clear from the construction, the model and prior are cleverly engineered to induce pathological behavior and ultimately inconsistency: the members of $\FF_\triangle$ feature severe and discontinuous oscillations, receiving high prior mass, in a way that weakly targets the uniform distribution through densities that are far from the uniform density. The pathological nature of the model, and in particular its deliberate targeting of the uniform distribution as a candidate data-generating process, is made even more apparent by the following lemma.

\begin{lemma}
    Barron's model is Hellinger consistent at all $f\in\mathbb F\setminus\{f_0\}$.
\end{lemma}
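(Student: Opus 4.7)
The plan is to split the analysis by the location of $f_\star$: Case~1, $f_\star = f_\theta$ for some $\theta \in (0,1]$; Case~2, $f_\star = g_\star$ for some $g_\star \in \FF_\triangle$. In both cases I would first verify KL support at $f_\star$ so that Lemma~\ref{lem:schwartz} applies. For Case~1, a direct calculation using the fact that $X \sim f_\theta$ implies $\Phi^{-1}(X) \sim N(\sqrt{2\theta}, 1)$ yields $\kl(f_\theta, f_{\theta'}) = (\sqrt{\theta} - \sqrt{\theta'})^2$; continuity of this map at $\theta$, together with the positive density $e^{-1/\theta'}$ of $\Pi_0$ on $(0,1]$, secures KL support. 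For Case~2, $\Pi$ has a positive atom at $g_\star$, which gives KL support trivially. Hence in both cases weak consistency holds and $\int R_n(f)\,\Pi(df) \geq e^{-\tau n}$ eventually almost surely for every $\tau > 0$.

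In Case~1, I would show $\Pi(\FF_\triangle \mid X_{1:n}) \to 0$ a.s.\ and then upgrade weak to Hellinger consistency within the parametric family $\FF_0$. The step-function structure of $\FF_\triangle$ enables a direct combinatorial computation: if $K_n(N)$ denotes the number of intervals of $P_N$ hit by the sample, then
\begin{equation*}
\frac{1}{|\FF_N|}\sum_{g \in \FF_N} R_n(g) \;=\; \frac{\binom{2N^2 - K_n(N)}{N^2 - K_n(N)}}{\binom{2N^2}{N^2}} \cdot \frac{2^n}{\prod_{i=1}^n f_\theta(X_i)} \;\leq\; \frac{2^{n - K_n(N)}}{\prod_{i=1}^n f_\theta(X_i)}.
\end{equation*}
For $N$ large, $K_n(N) = n$ a.s.\ (the $X_i$'s land in distinct intervals), and since $E_{f_\theta}[\log f_\theta(X)] = \theta$, the strong law yields $\prod_i f_\theta(X_i)^{-1} = e^{-n\theta + o(n)}$. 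Summing the $N^{-2}$-weighted contributions produces exponential decay at rate $\theta > 0$, which beats the $e^{-\tau n}$ denominator bound for any $\tau < \theta$ and forces $\Pi(\FF_\triangle \mid X_{1:n}) \to 0$. For the Hellinger step inside $\FF_0$, the map $\theta \mapsto f_\theta$ is a continuous injection from the compact $[0,1]$ to $(\FF, d_h)$, hence a homeomorphism onto its image; weak and Hellinger topologies coincide on $\FF_0$, and weak consistency transfers to Hellinger consistency.

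In Case~2, the denominator is bounded below by $\Pi(\{g_\star\}) > 0$ since $R_n(g_\star) = 1$, so it suffices to show $\int_{\{d_h(f, g_\star) > \epsilon\}} R_n(f)\,\Pi(df) \to 0$ a.s., which I would handle by splitting over $\FF_0$ and $\FF_\triangle \setminus \{g_\star\}$. Using $g_\star(X_i) = 2$ a.s., the $\FF_0$ contribution equals $2^{-n} \int_0^1 \prod_i f_\theta(X_i)\,\Pi_0(d\theta)$; a uniform SLLN and Laplace's method show it is of order $\exp\{n[\mu_\star^2/2 - \ln 2] + o(n)\}$, where $\mu_\star := E_{g_\star}[\Phi^{-1}(X)] = 2 \int_{A_{g_\star}} \Phi^{-1}(x)\,dx$. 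The key estimate is that, among all measurable sets $A \subseteq [0,1)$ with $|A| = 1/2$, $|\int_A \Phi^{-1}(x)\,dx|$ is maximized at $A = [1/2, 1)$ with value $1/\sqrt{2\pi}$; this yields $|\mu_\star| \leq 2/\sqrt{2\pi}$ and $\mu_\star^2/2 \leq 1/\pi < \ln 2$, hence exponential decay. For $g \in \FF_\triangle \setminus \{g_\star\}$, one has $R_n(g) = \mathbf{1}_{\{X_{1:n} \subseteq A_g\}}$ with $\EE_{g_\star}[R_n(g)] = (1 - d_h^2(g, g_\star)/2)^n \leq (1 - \epsilon^2/2)^n$ on $\{d_h(\cdot, g_\star) > \epsilon\}$; Fubini and Borel-Cantelli convert this geometric expectation decay into almost sure vanishing of the integral.

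I expect the main obstacle to be the Case~2 bound $\Pi(\FF_0 \mid X_{1:n}) \to 0$: it hinges on the key inequality $1/\pi < \ln 2$, which quantifies the fact that the Gaussian-tilting family $\FF_0$ cannot approximate $g_\star$ well enough in Kullback--Leibler divergence to outweigh the $\ln 2$ per-observation likelihood ``premium'' earned by a density supported exactly on $A_{g_\star}$. A secondary technical point, in Case~1, is the intermediate range $\sqrt{n} \lesssim N \lesssim n$ where $K_n(N)$ may not yet equal $n$; this is controlled by noting that for $N^2 \ll n$ the partition $P_N$ is essentially fully occupied under $f_\theta$-sampling, forcing $K_n(N) > N^2$ and hence vanishing inner sums via the combinatorial identity above, so standard Chernoff-type concentration bounds close the argument.
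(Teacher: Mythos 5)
Your overall architecture (split on where $f_\star$ lies, kill $\FF_\triangle$ in Case~1, kill $\FF_0$ and the wrong atoms in Case~2) is sound, and your Case~2 is correct as sketched: the identity $\EE_{g_\star}[R_n(g)]=(1-d_h^2(g,g_\star)/2)^n$ for $g\in\FF_\triangle$ and the bound $\mu_\star^2/2\le 1/\pi<\ln 2$ for the $\FF_0$ contribution are both right, though they do by explicit computation what the paper dispatches in one line by invoking Doob's theorem at prior atoms. The genuine gap is in Case~1, precisely in the ``intermediate range'' you flag but do not actually close. Your only quantitative tool there is the crude bound $\binom{2N^2-K_n(N)}{N^2-K_n(N)}/\binom{2N^2}{N^2}\le 2^{-K_n(N)}$, giving an average over $\FF_N$ of at most $2^{\,n-K_n(N)}e^{-n\theta+o(n)}$. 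Take $N^2\approx n$, so $P_N$ has $2n$ cells: under $f_\theta$ the typical occupancy is $K_n(N)\approx 2n(1-e^{-1/2})\approx 0.79\,n$, which is neither $n$ (your large-$N$ regime) nor larger than $N^2$ (your small-$N$ regime, which in any case requires $N^2\lesssim n$, not ``$\sqrt n\lesssim N\lesssim n$''). Then $2^{\,n-K_n(N)}\approx e^{0.147\,n}$, and your bound fails to decay for every $\theta<0.147$; more generally, for small $\theta$ the bound is useless for all $N^2$ up to order $n/\theta$. So ``standard Chernoff-type concentration'' does not close the argument: to rescue it you would need the sharper hypergeometric estimate $\prod_{j<K}\frac{N^2-j}{2N^2-j}=2^{-K}\exp\{-\Theta(K^2/N^2)\}$, i.e.\ the collision penalty beyond $2^{-K}$, together with occupancy lower bounds under $f_\theta$ (whose cell probabilities are not uniform, and which must hold uniformly over the relevant $N$ with summability in $n$). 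That is a substantive additional argument, not a routine patch.

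It is also avoidable. The paper handles Case~1 without any combinatorics: every $g\in\FF_\triangle$ is bounded by $2$, whereas for $\theta>0$ there is $x_\theta<1$ with $f_\theta(x)>2$ on $[x_\theta,1)$, so $G((x_\theta,1))\le 2(1-x_\theta)<F_\theta((x_\theta,1))$ uniformly over $g\in\FF_\triangle$; hence $\FF_\triangle$ is excluded from a fixed weak neighborhood of $F_\theta$, and Schwartz's theorem (weak consistency, which you have already secured via the KL computation $(\sqrt\theta-\sqrt{\theta'})^2$) immediately gives $\Pi(\FF_\triangle\mid X_{1:n})\to 0$ almost surely. Your remaining step---weak and Hellinger topologies coincide on the compact, identifiable family $\FF_0$---matches the paper and is fine. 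If you keep your computational route for its own interest, the missing hypergeometric/occupancy analysis in the regime $N^2\asymp n$ is the piece you must supply; as written, the proof does not go through for small $\theta$.
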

\begin{proof}
    For any $\theta_1, \theta_2 \in [0,1]$, one easily checks that $\kl(f_{\theta_1}, f_{\theta_2}) = (\theta_2 - \theta_1) + \sqrt{2\theta_1}(\sqrt{2\theta_1} - \sqrt{2\theta_2})$, which vanishes as $\lvert \theta_1 - \theta_2 \rvert \to 0$. Hence, every KL neighborhood of $f = f_{\theta_\star}$, for $\theta_\star \in [0,1]$, contains a Euclidean neighborhood of the form $\{f_\theta \in \FF_0 : \lvert \theta - \theta_\star \rvert < \delta\}$ for some $\delta>0$. This observation, combined with the full support of $\Pi_0$, implies that any $f \in \FF_0$ belongs to the KL support of the prior. The same holds trivially for any $f \in \FF_\triangle$, since each of these densities is an atom of $\Pi_\triangle$. Therefore, by Schwartz's theorem, the posterior is weakly consistent at all $f \in \FF$.

    Now take $f \neq f_0$. If $f \in \FF_\triangle$, then $\Pi$ assigns positive mass to $\{f\}$, so by Doob's consistency theorem, the posterior is Hellinger consistent at $f$. If instead $f = f_\theta$ for some $\theta \in (0,1]$, note that no sequence $(f^k)_{k \in \NN} \in \FF^\NN$ can weakly converge to $f$ unless it also converges to $f$ in Hellinger distance. Indeed, $\FF_0$ is identifiable and the Hellinger distance between its elements depends continuously on the Euclidean distance between their parameters, so no sequence eventually contained in $\FF_0$ may converge weakly to $f_\theta$ without converging to it in the Hellinger sense; moreover, for every $\theta \in (0,1]$, there exists $x_\theta \in (0,1)$ such that $f_\theta(x) > 2 = \max_{g \in \FF_\triangle,\, x \in [0,1)} g(x)$ for all $x \ge x_\theta$. Hence, no sequence eventually contained in $\FF_\triangle$ can weakly approximate $f_\theta$. These facts, together with weak consistency, yield Hellinger consistency at $f$.
\end{proof}

The above lemma shows that, despite the pathological construction behind Barron’s model, consistency is challenged only at the uniform distribution, through oscillatory densities that target it specifically. This provides an initial glimpse into the paradoxical nature of inconsistency: the latter requires the statistician to construct a Bayesian model that reflects precise knowledge of the true distribution, which is, by the very nature of statistical inference, unknown to them. To sharpen our understanding of this paradox, we now introduce a new notion of inconsistency—one that the counterexample is later shown to satisfy—and examine the structural implications of this behavior.

\begin{definition}\label{def:gamma_inconsistency}
    Let $\gamma>0$. The posterior distribution is $\gamma$-\emph{inconsistent} if
    \[
        \limsup_{n\to\infty} \Pi\left(\left\{f \in \FF : R_n(f) = e^{\gamma n} \right\} \mid X_{1:n} \right) = 1.
    \]
    almost surely. Equivalently, $\Pi\left(\left\{f \in \FF : R_n(f) = e^{\gamma n} \right\} \mid X_{1:n} \right) > \delta$ infinitely often almost surely, for any $\delta<1$.
\end{definition}

In words, $\gamma$-inconsistency requires the posterior distribution to assign persistently high mass to densities that yield a value of $R_n^{1/n}(f)$ exactly equal to $e^\gamma>1$. Intuitively, this phenomenon may be interpreted as a form of inconsistency because the densities in question are far, in terms of likelihood ratio, from $f_\star$. With the next result, we show that this behavior imposes an unnaturally strong connection between the likelihood model, the true distribution, and the prior.

\begin{theorem}\label{thm:gamma_inconsistency}
    Assume that $f_\star$ lies in the KL support of $\Pi$, and that the posterior distribution is $\gamma$-inconsistent. Then, almost surely, $\gamma$ is an accumulation point of the sequence
    \[
        -n^{-1} \ln \Pi\left(\left\{f \in \FF : n^{-1}\ln R_n(f) = \gamma \right\}\right), \quad n \in \NN.
    \]
\end{theorem}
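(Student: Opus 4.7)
The proof is driven by a single exact identity coming from Bayes' rule, combined with two asymptotic controls that isolate the role of $\gamma$. Set $A_n := \{f \in \FF : R_n(f) = e^{\gamma n}\}$ and $Z_n := \int_\FF R_n(f)\,\Pi(\mathrm df)$. On $A_n$ the likelihood ratio is constantly equal to $e^{\gamma n}$, so formula \eqref{eq:def_posterior} gives $\Pi(A_n \mid X_{1:n}) = e^{\gamma n}\,\Pi(A_n)/Z_n$. Taking logarithms and dividing by $n$ yields the identity
\begin{equation*}
    -n^{-1}\ln \Pi(A_n) \;=\; \gamma \;-\; n^{-1}\ln Z_n \;-\; n^{-1}\ln \Pi(A_n \mid X_{1:n}),
\end{equation*}
which expresses the $n$-th term of the sequence in the theorem as $\gamma$ plus two error terms. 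The entire proof then reduces to showing that the two error terms vanish along a suitable subsequence.

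\textbf{Controlling $n^{-1}\ln Z_n$.} I would show $n^{-1}\ln Z_n \to 0$ almost surely. The lower bound $\liminf_n n^{-1}\ln Z_n \geq 0$ is exactly the second conclusion of Lemma~\ref{lem:schwartz}, since $Z_n \geq e^{-\tau n}$ eventually a.s.\ for every $\tau>0$. For the matching upper bound, Fubini gives $\EE[Z_n] = \int_\FF \EE[R_n(f)]\,\Pi(\mathrm df) \leq 1$, so by Markov's inequality $\PP(Z_n > e^{\epsilon n}) \leq e^{-\epsilon n}$, and a Borel--Cantelli argument ensures $\limsup_n n^{-1}\ln Z_n \leq \epsilon$ a.s.\ for every $\epsilon>0$. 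Combining these bounds gives $n^{-1}\ln Z_n \to 0$ almost surely.

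\textbf{Controlling $n^{-1}\ln \Pi(A_n\mid X_{1:n})$ along a subsequence.} By the definition of $\gamma$-inconsistency, $\limsup_n \Pi(A_n\mid X_{1:n}) = 1$ a.s., so there exists a (random) subsequence $(n_k)_k$ along which $\Pi(A_{n_k}\mid X_{1:n_k}) \to 1$. Along this subsequence $\ln \Pi(A_{n_k}\mid X_{1:n_k}) \to 0$, and in particular $n_k^{-1}\ln \Pi(A_{n_k}\mid X_{1:n_k}) \to 0$. Substituting into the identity above, the two error terms vanish along $(n_k)_k$, so $-n_k^{-1}\ln \Pi(A_{n_k}) \to \gamma$ almost surely. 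This exhibits $\gamma$ as an accumulation point of the sequence in the theorem statement, completing the argument.

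\textbf{Main obstacle.} Conceptually, the proof is a short manipulation; the only genuinely nontrivial input is the two-sided control $n^{-1}\ln Z_n \to 0$, whose lower half relies on Lemma~\ref{lem:schwartz} (and hence on the KL support hypothesis) while the upper half is a standard expectation--Markov--Borel--Cantelli argument. All remaining steps are algebraic or immediate consequences of the definition of $\gamma$-inconsistency.
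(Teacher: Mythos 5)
Your proof is correct and takes essentially the same route as the paper's (which imports the argument of Theorem~\ref{thm:alpha_beta_inconsistency}): the Schwartz lower bound on the evidence from Lemma~\ref{lem:schwartz}, a Markov--Borel--Cantelli upper bound on $\int_\FF R_n(f)\,\Pi(\mathrm df)$, and the fact that $R_n \equiv e^{\gamma n}$ on $A_n$. Packaging these as an exact Bayes identity plus a subsequence extraction, instead of the paper's two-sided infinitely-often bounds intersected over tolerance levels $1/k$, is only a cosmetic difference.
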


\begin{proof}
    % Denote $B_{n}:=\{f\in\FF :R_n(f)= e^{\gamma n}\}$. Lemma~\ref{lem:schwartz} yields $\int_\mathbb F R_n(f)\, \Pi(\mathrm df)\geq e^{-\psi n}$ ultimately almost surely for all $\psi>0$, so we obtain
    % %
    % \begin{equation*}
    %     \int_{B_{n}} R_n(f)\, \Pi(\mathrm d f)\geq e^{-\tau_1n}
    % \end{equation*}
    % %
    % infinitely often with positive probability for all $\tau_1>0$. Moreover, by a simple application of Markov's inequality and the first Borel-Cantelli lemma, we have
    % %
    % \begin{equation*}
    %     \int_{B_{n}} R_n(f)\, \Pi(\mathrm d f)\leq \int_{\mathbb F} R_n(f)\, \Pi(\mathrm d f)\leq e^{\tau_2n}
    % \end{equation*}
    % %
    % ultimately almost surely for all $\tau_2>0$. By definition, we also have $\int_{B_{n}} R_n(f)\, \Pi(\mathrm d f)\equiv e^{\gamma n}\, \Pi(B_{\xi,n})$, which combined with the previous two inequalities yields
    A very similar line of proof as in Theorem~\ref{thm:alpha_beta_inconsistency} below yields
    \begin{equation*}
        \gamma - \tau_1 \leq -n^{-1}\ln\Pi(\{f\in\mathbb F : R_n(f) = e^{\gamma n}\}) \leq \gamma + \tau_2
    \end{equation*}
    infinitely often almost surely, for all $\tau_1, \tau_2>0$. Therefore,
    \begin{equation*}
        F_\star^\infty\left(\bigcap_{k\in\NN}\Big\{\gamma - k^{-1} \leq -n^{-1}\ln\Pi(\{f\in\mathbb F : R_n(f) = e^{\gamma n}\}) \leq \gamma + k^{-1} \quad \textnormal{i.o.}\Big\}\right) = 1.
    \end{equation*}
    This implies the conclusion of the theorem.
\end{proof}

In other words, $\gamma$-inconsistency implies that the prior mass assigned to densities that achieve a specific likelihood ratio value dictated by $\gamma$, must have an asymptotic behavior directly related to $\gamma$ itself. In practice, this is an unachievable scenario because (i) it requires a close alignment between the prior and the data, (ii) it necessitates a strong connection between the prior and the true, in practice unknown distribution $F_\star^\infty$ (as it appears both in the likelihood ratio and in the ``almost surely'' qualifier), and (iii) commonly employed full-support priors cannot satisfy the exact relation described in the theorem (unless they are carefully engineered via strategically placed atoms at oscillatory densities), since sets of the form $\{f \in \FF : n^{-1} \ln R_n(f) = \gamma\}$ define proper, data-dependent subspaces of $\FF$.

We are now ready to state our main result concerning the counterexample, leading to the conclusion that the inconsistency observed in Barron's model is tied to an artificial design.

\begin{theorem}
    Barron's counterexample is $\ln 2$-inconsistent at the uniform density.
\end{theorem}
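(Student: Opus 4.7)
The plan is to observe that for Barron's model the event $\{R_n(f) = e^{(\ln 2)n}\} = \{R_n(f) = 2^n\}$ coincides, up to a posterior-null set, with $\FF_\triangle$. Then the claim reduces to the already-established inconsistency that $\limsup_n \Pi(\FF_\triangle \mid X_{1:n}) = 1$ almost surely, which is the content of Barron's counterexample as recalled in the paragraph preceding Definition~\ref{def:gamma_inconsistency}.

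First I would note that each $f \in \FF_\triangle$ takes only the values $0$ and $2$ on $[0,1)$, so
\begin{equation*}
    R_n(f) = \prod_{i=1}^n f(X_i) \in \{0,\, 2^n\},
\end{equation*}
with $R_n(f) = 2^n$ iff $f(X_i) = 2$ for every $i = 1,\dots,n$. Consequently, every $f \in \FF_\triangle$ with $R_n(f) \neq 2^n$ contributes zero to the posterior, and so
\begin{equation*}
    \Pi\bigl(\FF_\triangle \cap \{R_n = 2^n\} \,\big|\, X_{1:n}\bigr) = \Pi(\FF_\triangle \mid X_{1:n}).
\end{equation*}

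Second, I would dispose of the continuous branch $\FF_0$. For $f_\theta \in \FF_0$,
\begin{equation*}
    \ln R_n(f_\theta) = -n\theta + \sqrt{2\theta}\,\sum_{i=1}^n \Phi^{-1}(X_i),
\end{equation*}
which is a real-analytic function of $\theta$ on $[0,1]$ and not identically equal to $n\ln 2$. Hence $\{\theta : R_n(f_\theta) = 2^n\}$ is finite (at most two solutions of a quadratic in $\sqrt{\theta}$ for given data), and since $\Pi_0$ admits a density on $[0,1]$, this set has $\Pi_0$-measure zero. Therefore $\Pi(\FF_0 \cap \{R_n = 2^n\} \mid X_{1:n}) = 0$ for every $n$ and every realization of the data.

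Combining these two observations yields
\begin{equation*}
    \Pi\bigl(\{f \in \FF : R_n(f) = e^{(\ln 2)n}\} \,\big|\, X_{1:n}\bigr) = \Pi(\FF_\triangle \mid X_{1:n}),
\end{equation*}
so the desired $\ln 2$-inconsistency is exactly the statement $\limsup_n \Pi(\FF_\triangle \mid X_{1:n}) = 1$ almost surely, which is Barron's original result. There is no real obstacle here beyond carefully verifying the $\Pi_0$-null argument for the $\FF_0$ piece; the work has already been done in the construction of the counterexample, and Definition~\ref{def:gamma_inconsistency} has been designed precisely to capture that phenomenon quantitatively through the value $\gamma = \ln 2$, which arises from the maximum $2$ of the oscillatory densities in $\FF_\triangle$.
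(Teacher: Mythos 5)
Your argument is correct and follows essentially the same route as the paper: both proofs hinge on the observation that every $f\in\FF_\triangle$ takes only the values $0$ and $2$, so that $R_n(f)\in\{0,2^n\}$ and the event $B_n:=\{f\in\FF: R_n(f)=2^n\}$ carries the posterior mass of $\FF_\triangle$ (your extra check that $\FF_0\cap B_n$ is $\Pi_0$-null, via the quadratic in $\sqrt{\theta}$, is correct but not needed by the paper, which works with one-sided bounds on $B_n$ and $B_n^c$). The only substantive difference is that you invoke Barron's conclusion $\limsup_{n}\Pi(\FF_\triangle\mid X_{1:n})=1$ almost surely as a black box—legitimate here, since the paper itself states it with citation before Definition~\ref{def:gamma_inconsistency}—whereas the paper re-derives that limsup from the intermediate estimates (equations 30 and 31 of \cite{barron1999consistency}), bounding the posterior odds of $B_n$ below by $a_1 e^{2\sqrt{n}}/(a_2 n)$ infinitely often almost surely, which makes the quantitative mechanism explicit but is otherwise the same argument.
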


\begin{proof}
    Let $B_n:= \{f : R_n(f) = 2^n\}$. We will show that $\limsup_{n\to\infty}\Pi(B_n\mid X_{1:n})=1$ almost surely. Equation 30 in \cite{barron1999consistency} shows that
    \begin{equation*}
        \int_{B_n} R_n(f)\, \Pi(\mathrm df) \geq \int_{B_n\cap\mathbb F_\triangle} R_n(f)\, \Pi(\mathrm df) \geq a_1 n^{-1}
    \end{equation*}
    ultimately almost surely for some universal constant $a_1>0$. Moreover
    \begin{align*}
        \int_{B_n^c} R_n(f)\, \Pi(\mathrm df)  \leq \int_{\mathbb F_0} R_n(f)\, \Pi(\mathrm df) + \int_{B_n^c\cap \mathbb F_\triangle} R_n(f)\, \Pi(\mathrm df),
    \end{align*}
    Equation 31 in \cite{barron1999consistency} shows that the first term is smaller than $a_2\exp\{-2\sqrt n\}$ infinitely often almost surely, for $a_2>0$ another universal constant. As for the second term, notice that, for $f\in\mathbb F_\triangle\cap B_n^c$, we have $R_n(f)=0$: this is because $R_n(f)$ is at most $2^n$ (when $f(X_i)=2$ for all $i=1,\dots,n$), and $R_n(f)=0$ otherwise. Therefore
    \begin{equation*}
        \frac{\Pi(B_n \mid X_{1:n})}{\Pi(B_n^c \mid X_{1:n})} = \frac{\int_{B_n} R_n(f)\, \Pi(\mathrm df)}{\int_{B_n^c} R_n(f)\, \Pi(\mathrm df)} \geq \frac{a_1\exp\{2\sqrt n\}}{a_2 n}
    \end{equation*}
    infinitely often almost surely. This implies $\limsup_{n\to\infty}\Pi(B_n\mid X_{1:n})=1$ almost surely, or equivalently $\Pi(B_n\mid X_{1:n})>\delta$ infinitely often almost surely for all $\delta\in(0,1)$.
\end{proof}

\section{A necessary condition for Hellinger inconsistency}\label{sec:necessary_condition}

Intuitively, it is clear that $\gamma$-inconsistency is intimately connected with Hellinger inconsistency in Barron's counterexample: because a large prior mass is placed on densities that achieve a constant and high value of $R_n^{1/n}(f)$, the posterior concentrates significant mass on them persistently, resulting in Hellinger inconsistency. To formalize this intuition, we slightly weaken the notion of $\gamma$-inconsistency and proceed to show that, for a certain class of models encompassing Barron's, this new form of inconsistency is a necessary consequence of Hellinger inconsistency.

\begin{definition}\label{def:alpha_beta_inconsistency}
    Let $0<\alpha\leq \beta<\infty$. The posterior distribution is \emph{$(\alpha,\beta)$-inconsistent} if there exists $\delta>0$ such that
    \begin{equation*}
        F_\star^\infty\bigg(\Pi(\{f\in\FF : e^{\alpha n}\leq R_n(f)\leq e^{\beta n}\}\mid X_{1:n})>\delta \quad \textnormal{i.o.}\bigg) > 0.
    \end{equation*}
\end{definition}
In words, an $(\alpha,\beta)$-inconsistent posterior has a positive probability to place persistently high mass on densities that generate exponentially high likelihood ratio values lying in a certain fixed range. Clearly, $\gamma$-inconsistency, as satisfied by Barron's model, is more extreme than $(\alpha, \beta)$-inconsistency, for three reasons: first, it requires the posterior to put mass on densities that yield a normalized likelihood ratio value exactly equal to a certain value $e^\gamma$, rather than lying in a range $[e^\alpha,e^\beta]$; second, it requires the posterior mass of this pathological set of densities to be larger than any $\delta<1$ infinitely often, rather than only for some possibly small $\delta>0$; finally, it requires this to happen with probability 1, rather than just with positive probability. Notice that the shift to $(\alpha,\beta)$-inconsistency also naturally broadens the scope of our analysis to diffuse priors, since it is difficult to envision how any diffuse prior could satisfy the exact equalities required by the stricter notion of $\gamma$-inconsistency.

Before proceeding with the analysis, however, it is important to emphasize that, although $(\alpha,\beta)$-inconsistency was argued to be milder than $\gamma$-inconsistency, its strength and proximity to the latter should not be underestimated. Indeed, a simple argument—based on iteratively bisecting the interval $[e^\alpha,e^\beta]$, see the proof of Theorem~\ref{thm:alpha_beta_inconsistency} below—shows that the existence of such a range essentially implies the existence of a point $\gamma \in [\alpha,\beta]$ such that the posterior places persistently high mass on densities with a normalized likelihood ratio converging (along a subsequence) to $e^\gamma$. This is weaker than $\gamma$-inconsistency, in that it allows coincidence with $e^\gamma$ only approximately rather than exactly, but it is closely related to it in spirit.

The next step is to define a fairly general class of models for which, as we will see, $(\alpha,\beta)$-inconsistency is particularly useful to study the consequences of Hellinger inconsistency.

\begin{definition}\label{def:beta_bounded}
    Let $\beta\geq 0$. A Bayesian model $(\FF, \Pi)$ is \emph{$\beta$-bounded under $F_\star$} if $\lim_{n\to\infty}\Pi(\{f\in\mathbb F : R_n(f)> e^{\beta n}\} \mid X_{1:n}) = 0$ almost surely under $F_\star^\infty$.
\end{definition}

In essence, $\beta$-boundedness implies posterior concentration on bounded values of the normalized likelihood ratio $R_n^{1/n}(f)$. This is the case, for instance, in models with bounded $f(x)/f_\star(x)$. It is not hard to see that Barron's counterexample is in fact $\beta$-bounded under the uniform distribution, with $\beta=\ln 2$, since the oscillating densities in $\mathbb F_\triangle$ achieve a likelihood of at most 2. More formally, let $B_n:=\{f\in\FF : R_n(f)>2^n\, \textnormal{ i.o.}\}$, so $\Pi(B_n\mid X_{1:n}) = \Pi(B_n\cap \FF_0\mid X_{1:n})$ and, by simple maximum likelihood calculations, we have
\begin{align*}
    \left\{\sup_{\theta\in[0,1] }\prod_{i=1}^n f_\theta(X_i)>e^{\beta n}\quad \textnormal{i.o.}\right\} &= \left\{\exp\left\{\frac{n}{2}W_n^2\right\}>e^{\beta n},\, W_n>0\quad \textnormal{i.o.}\right\} \\
    & \subseteq  \left\{W_n^2>2\beta\quad\textnormal{i.o.}\right\},
\end{align*}
where $W_n:=n^{-1}\sum_{i=1}^n \Phi^{-1}(X_i)\to 0$ almost surely by the strong law of large numbers. This shows that $B_n\cap \mathbb F_0$ is ultimately empty when restricted to a set of probability one, implying that Barron's model is $\beta$-bounded under $F_\star$ for $\beta=\ln2$.

We are now in a position to state an important result that reveals a strong link between Hellinger and $(\alpha,\beta)$-inconsistency for $\beta$-bounded models.

\begin{theorem}\label{thm:beta_boundedness}
    Assume that the model $(\FF, \Pi)$ is $\beta$-bounded under $F_\star$ for some $\beta>0$. Then, if $f_\star$ is in the KL support of $\Pi$, the posterior is Hellinger inconsistent only if it is $(\alpha,\beta)$-inconsistent for some $\alpha\in(0, \beta]$.
\end{theorem}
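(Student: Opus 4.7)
The strategy is to argue by contradiction. Suppose the posterior is Hellinger inconsistent while simultaneously failing to be $(\alpha,\beta)$-inconsistent for every $\alpha\in(0,\beta]$. Hellinger inconsistency produces $\varepsilon,\delta_0>0$ such that, with positive $F_\star^\infty$-probability, $\Pi(H_\varepsilon\mid X_{1:n})>\delta_0$ infinitely often, where $H_\varepsilon:=\{f\in\FF:d_h(f,f_\star)>\varepsilon\}$. The contradictory hypothesis, applied with $\delta$ ranging over a countable sequence tending to zero, forces $\Pi(\{e^{\alpha n}\le R_n(f)\le e^{\beta n}\}\mid X_{1:n})\to 0$ almost surely for every $\alpha\in(0,\beta]$. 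My objective is to show that, combined with $\beta$-boundedness and KL support, this implies $\Pi(H_\varepsilon\mid X_{1:n})\to 0$ almost surely, contradicting Hellinger inconsistency.

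For any fixed $\alpha\in(0,\beta]$, I would partition the posterior mass on $H_\varepsilon$ into three regions delimited by the value of $R_n$:
\begin{align*}
    \Pi(H_\varepsilon\mid X_{1:n}) &= \Pi\bigl(H_\varepsilon\cap\{R_n\le e^{\alpha n}\}\mid X_{1:n}\bigr)\\
    &\quad+\Pi\bigl(H_\varepsilon\cap\{e^{\alpha n}<R_n\le e^{\beta n}\}\mid X_{1:n}\bigr)\\
    &\quad+\Pi\bigl(H_\varepsilon\cap\{R_n>e^{\beta n}\}\mid X_{1:n}\bigr).
\end{align*}
The third summand vanishes almost surely by $\beta$-boundedness; the second is dominated by $\Pi(\{e^{\alpha n}\le R_n\le e^{\beta n}\}\mid X_{1:n})$ and vanishes under the contradictory hypothesis. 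Everything therefore reduces to controlling the first summand.

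To do so, I would exploit the affinity identity $\mathbb E[\sqrt{R_n(f)}]=(1-d_h^2(f,f_\star)/2)^n\le e^{-n\varepsilon^2/2}$, valid uniformly for $f\in H_\varepsilon$. Integrating over $\Pi$ via Fubini, applying Markov's inequality with threshold $e^{-n\varepsilon^2/4}$, and invoking a summable Borel--Cantelli argument would yield
\[
\int_{H_\varepsilon}\sqrt{R_n(f)}\,\Pi(\mathrm df)\le e^{-n\varepsilon^2/4}\quad\text{ultimately almost surely.}
\]
On $\{R_n\le e^{\alpha n}\}$ one has $\sqrt{R_n}\le e^{\alpha n/2}$, so writing $R_n=\sqrt{R_n}\cdot\sqrt{R_n}$ and combining with the normalizer lower bound $\int R_n\,\Pi(\mathrm df)\ge e^{-\tau n}$ from Lemma~\ref{lem:schwartz}, I would obtain
\[
\Pi\bigl(H_\varepsilon\cap\{R_n\le e^{\alpha n}\}\mid X_{1:n}\bigr)\le e^{n(\alpha/2-\varepsilon^2/4+\tau)}
\]
ultimately a.s., for any $\tau>0$. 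Choosing $\alpha\in(0,\min\{\beta,\varepsilon^2/2\})$ (a nonempty interval since $\beta,\varepsilon>0$) and $\tau$ small enough makes the exponent negative, so the first summand also vanishes, completing the contradiction.

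The main obstacle is the control of the low-likelihood region $\{R_n\le e^{\alpha n}\}$: Lemma~\ref{lem:schwartz} alone is too weak here, since it only delivers a lossy $e^{\tau n}$ bound for the normalizer. The Hellinger affinity contraction supplies the extra exponential damping $e^{-n\varepsilon^2/4}$, and the $\sqrt{R_n}\cdot\sqrt{R_n}$ decomposition is the key trick that lets one absorb only half of the threshold $e^{\alpha n}$, thereby leaving enough room for the affinity contraction to dominate. A secondary subtlety is that the contradictory hypothesis must be converted into an almost sure statement by intersecting the countable family of null sets associated with $\delta\in\{1/k:k\in\NN\}$, which is routine but worth verifying so that the three summands all vanish on a single event of full probability.
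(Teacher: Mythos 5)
Your proposal is correct and follows essentially the same route as the paper's proof: contraposition (equivalently your contradiction), a split of the posterior mass on $\{d_h(f,f_\star)>\varepsilon\}$ by the value of $R_n$, with the high region killed by $\beta$-boundedness plus the assumed failure of $(\alpha,\beta)$-inconsistency, and the low region killed via the $R_n=\sqrt{R_n}\cdot\sqrt{R_n}$ trick, the Hellinger affinity bound $e^{-n\varepsilon^2/2}$ with Markov and Borel--Cantelli, and Lemma~\ref{lem:schwartz}'s lower bound on the normalizer, choosing $\alpha<\varepsilon^2/2$ and $\tau$ small. The only cosmetic difference is your explicit three-way decomposition where the paper merges the two high-likelihood regions into one vanishing term.
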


\begin{proof}
    We proceed by contraposition. Assume that the posterior is not $(\alpha,\beta)$-inconsistent for any $\alpha\in(0,\beta]$, so $\Pi(\{f\in \FF : R_n(f) \geq  e^{\alpha n}\}\mid X_{1:n})\to 0$ almost surely for all $\alpha>0$. Then, denoting $A_\varepsilon:= \{f\in\FF : d_h(f, f_\star)<\varepsilon\}$ for any $\varepsilon>0$, we have
    \begin{equation*}
        \Pi(A_\varepsilon^c\mid X_{1:n}) \leq \Pi(\{f\in\FF : R_n(f) \geq e^{\alpha n}\}) \,+\, \Pi(A_\varepsilon^c \cap \{f\in\FF : R_n(f) < e^{\alpha n}\}\mid X_{1:n}),
    \end{equation*}
    where the first addendum converges to 0 almost surely for all $\alpha>0$. As for the second one, Lemma \ref{lem:schwartz} implies
    \begin{equation*}
        \Pi(A_\varepsilon^c \cap \{f\in\FF : R_n(f) < e^{\alpha n}\}\mid X_{1:n}) \leq e^{\tau n} e^{\alpha n/2} \int_{A_\varepsilon^c} R_n^{1/2}(f) \,\Pi(\mathrm df)
    \end{equation*}
    ultimately almost surely for all $\tau>0$. By standard calculations, the integral can be shown to be smaller than $e^{-n\varepsilon^2/4}$ ultimately almost surely, so choosing $\alpha$ and $\tau$ small enough ensures the almost sure vanishing of $\Pi(A_\varepsilon^c\mid X_{1:n})$ as $n\to\infty$. As this holds for all $\varepsilon>0$, Hellinger consistency is concluded.
\end{proof}

Therefore, studying the consequences of $(\alpha,\beta)$-inconsistency provides insight into the mechanisms underlying Hellinger inconsistency. The next theorem,\footnote{To keep the notation concise, the statement of Theorem~\ref{thm:alpha_beta_inconsistency} focuses on the case $\alpha < \beta$, though the case $\alpha = \beta$ can be handled analogously to Theorem~\ref{thm:gamma_inconsistency}.} the central one of the article, shows that $(\alpha,\beta)$-inconsistency entails an artificial dependence of the prior on the likelihood model and the in practice unknown $F_\star$: the prior must assign mass to arbitrarily narrow intervals of values of $n^{-1}\ln R_n(f)$, with the required mass itself lying in such intervals. Not surprisingly, this mirrors the phenomenon observed in the more extreme case of $\gamma$-inconsistency.

\begin{theorem}\label{thm:alpha_beta_inconsistency}
    % Assume $f_\star$ is in the KL support of $\Pi$ and that, for some $0<\alpha < \beta<\infty$, the posterior is $(\alpha,\beta)$-inconsistent. Then, for any sufficiently small $\chi>0$, there exist $0<\xi<\chi$ and $\gamma_\chi\in[\alpha,\beta]$ such that
    % %
    % \begin{equation*}
    %     \gamma_\chi - \xi -\tau_1 \leq - n^{-1} \ln\Pi\left(\left\{e^{(\gamma_\chi -\xi)n}\leq R_n(f) \leq e^{(\gamma_\chi + \xi)n}\right\}\right) \leq \gamma_\chi + \xi + \tau_2
    % \end{equation*}
    % %
    % infinitely often with positive probability under $F_\star^\infty$, for any $\tau_1, \tau_2>0$.
    Assume that $f_\star$ is in the KL support of $\Pi$ and that, for some $0<\alpha < \beta<\infty$, the posterior is $(\alpha,\beta)$-inconsistent. Then there exists $\gamma\in[\alpha,\beta]$ such that, for all small $\chi>0$, one can find $\mu_1,\mu_2\geq 0$ with $\mu_1+\mu_2\leq \chi$ and
    \begin{equation}\label{eq:theorem_alpha_beta_inconsistency}
        \gamma - \mu_1 - \tau_1 \leq -n^{-1}\ln\Pi\big(\big\{f\in\FF : \gamma-\mu_1 \leq n^{-1}\ln R_n(f)\leq \gamma + \mu_2\big\}\big) \leq \gamma + \mu_2 + \tau_2
    \end{equation}
    infinitely often with positive probability, for all $\tau_1,\tau_2>0$.
\end{theorem}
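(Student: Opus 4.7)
The plan is to combine a bisection argument---which pins down the target exponent $\gamma$ together with shrinking sub-intervals of $[\alpha,\beta]$---with a direct two-sided comparison between the posterior mass of the relevant set and its prior counterpart, using Lemma~\ref{lem:schwartz} to control the marginal likelihood from below and a Markov/Borel--Cantelli bound to control it from above.

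First, I would extract $\gamma$ by bisection. By $(\alpha,\beta)$-inconsistency, there exist $\delta>0$ and an event $E_0$ of positive probability on which $\Pi(\{f : n^{-1}\ln R_n(f)\in[\alpha,\beta]\}\mid X_{1:n})>\delta$ infinitely often. Splitting $[\alpha,\beta]$ at its midpoint, on each path in $E_0$ and at each $n$ along the i.o. subsequence at least one of the two halves carries posterior mass exceeding $\delta/2$, so $E_0$ decomposes as the union of the events ``the left (resp. right) half exceeds $\delta/2$ i.o.'', at least one of which again has positive probability. Iterating yields nested intervals $I_k$ of length $2^{-k}(\beta-\alpha)$, positive-probability events $E_k\subseteq E_0$, and thresholds $\delta_k=\delta/2^k$ such that $\Pi(\{f : n^{-1}\ln R_n(f)\in I_k\}\mid X_{1:n})>\delta_k$ i.o. on $E_k$. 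Then $\gamma:=\bigcap_k I_k\in[\alpha,\beta]$ is a single point, and for a prescribed $\chi>0$, choosing $k$ with $|I_k|\leq \chi$ produces $\mu_1,\mu_2\geq 0$ with $\mu_1+\mu_2\leq \chi$ and $[\gamma-\mu_1,\gamma+\mu_2]=I_k$.

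Next, with $A_n:=\{f\in\FF : n^{-1}\ln R_n(f)\in[\gamma-\mu_1,\gamma+\mu_2]\}$, I would derive the two-sided bound on $-n^{-1}\ln\Pi(A_n)$. For the upper bound, I would use the pointwise inequality $R_n(f)\leq e^{n(\gamma+\mu_2)}$ on $A_n$ inside the posterior formula: Bayes' theorem gives $\Pi(A_n\mid X_{1:n})\leq e^{n(\gamma+\mu_2)}\Pi(A_n)/\int_\FF R_n\,\mathrm d\Pi$, and combining Lemma~\ref{lem:schwartz}'s bound $\int_\FF R_n\,\mathrm d\Pi\geq e^{-n\tau}$ (ultimately a.s.) with the i.o. posterior-mass lower bound on $E_k$ inverts to $\Pi(A_n)\geq \delta_k e^{-n(\gamma+\mu_2+\tau)}$ i.o., which translates to the desired upper bound after taking logarithms. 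For the lower bound, I would use $R_n(f)\geq e^{n(\gamma-\mu_1)}$ on $A_n$ to get $e^{n(\gamma-\mu_1)}\Pi(A_n)\leq \int_\FF R_n\,\mathrm d\Pi$, and then bound the marginal likelihood from above via $\mathbb E_{F_\star^\infty}[\int_\FF R_n\,\mathrm d\Pi]=1$ together with Markov's inequality and the first Borel--Cantelli lemma (using $\sum_n e^{-n\tau_1}<\infty$), yielding $\int_\FF R_n\,\mathrm d\Pi\leq e^{n\tau_1}$ ultimately a.s. Taking logarithms and absorbing the $-n^{-1}\ln\delta_k$ term into the slack recovers \eqref{eq:theorem_alpha_beta_inconsistency}.

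The main obstacle is the bisection step: one must carefully track which half of each interval retains positive probability (potentially along different i.o. subsequences on different paths) so that the nested events $E_k$ remain non-negligible at every level, and verify that the same $\gamma$ can be used uniformly in $\chi$ thanks to the nestedness. Once $\gamma$, $\mu_1$, $\mu_2$ and the event $E_k$ are pinned down, the remainder is a clean juxtaposition of the two pointwise likelihood-ratio bounds on $A_n$ against the Schwartz denominator bound from below and the Markov numerator bound from above.
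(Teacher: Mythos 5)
Your proposal is correct and follows essentially the same route as the paper's proof: a bisection of $[\alpha,\beta]$ that preserves positive-probability i.o.\ posterior mass on shrinking intervals, followed by sandwiching $\int_{A_n} R_n\,\mathrm d\Pi$ between $e^{n(\gamma-\mu_1)}\Pi(A_n)$ and $e^{n(\gamma+\mu_2)}\Pi(A_n)$, with Lemma~\ref{lem:schwartz} bounding the evidence from below and Markov plus Borel--Cantelli bounding it from above. Your explicit tracking of nested intervals $I_k$ and events $E_k$ (so that $\gamma=\bigcap_k I_k$ lies in every $I_k$, making $\mu_1,\mu_2\geq 0$ automatic) is a slightly more careful rendering of the step the paper handles by asserting that $(\gamma_{1/k})_k$ is Cauchy, but it is the same argument in substance.
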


\begin{proof}
    % For some $\delta>0$, we have
    % %
    % \begin{align*}
    %     \delta & < \Pi(\{f\in\FF : e^{\alpha n}\leq R_n(f)\leq e^{\beta n}\}\mid X_{1:n}) \\
    %     & \leq   \Pi(\{f\in\FF : e^{\alpha n}\leq R_n(f)\leq e^{cn}\}\mid X_{1:n}) \, + \, \Pi(\{f\in\FF : e^{cn}\leq R_n(f)\leq e^{\beta n}\}\mid X_{1:n})
    % \end{align*}
    % %
    % infinitely often with positive probability, where $c:=(\alpha+\beta)/2$. Therefore, at least one addendum must be greater than some positive constant infinitely often with positive probability. One can iterate this argument $\lceil\log_2((\alpha-\beta)/2\chi)\rceil$ times to find an interval of length $2\xi\leq 2\chi$, say $[\gamma_\chi - \xi, \gamma_\chi+\xi]$ for some $\gamma_\chi\in[\alpha,\beta]$, such that
    % %
    % \begin{equation*}
    %     \Pi(\{f\in\FF : e^{(\gamma_\chi - \xi) n}\leq R_n(f)\leq e^{(\gamma_\chi+\xi) n}\}\mid X_{1:n}) > \delta_\xi
    % \end{equation*}
    % %
    % infinitely often with positive probability, for some $\delta_\xi>0$.
    Without loss of generality, pick $\chi=1/k$ for $k\in\NN$ large enough. For some $\delta>0$, we have
    \begin{align*}
        \delta & < \Pi(\{f\in\FF : e^{\alpha n}\leq R_n(f)\leq e^{\beta n}\}\mid X_{1:n}) \\
        & \leq   \Pi(\{f\in\FF : e^{\alpha n}\leq R_n(f)\leq e^{cn}\}\mid X_{1:n}) \, + \, \Pi(\{f\in\FF : e^{cn}\leq R_n(f)\leq e^{\beta n}\}\mid X_{1:n})
    \end{align*}
    infinitely often with positive probability, where $c:=(\alpha+\beta)/2$. Therefore, at least one addendum must be greater than some positive constant infinitely often with positive probability. One can iterate this argument $\lceil\log_2((\alpha-\beta)/\chi)\rceil$ times to find an interval of length $\xi_\chi\leq \chi$, say $[\gamma_\chi - \xi_\chi/2,\, \gamma_\chi+\xi_\chi/2]$ for some $\gamma_\chi\in[\alpha,\beta]$, such that
    \begin{equation*}
        \Pi(\{f\in\FF : e^{(\gamma_\chi - \xi_\chi/2) n}\leq R_n(f)\leq e^{(\gamma_\chi+\xi_\chi/2) n}\}\mid X_{1:n}) > \delta_\chi
    \end{equation*}
    infinitely often with positive probability, for some $\delta_\chi>0$.

    When $B_{\chi,n}:=\big\{f\in\FF : e^{(\gamma_\chi - \xi_\chi/2) n}\leq R_n(f)\leq e^{(\gamma_\chi+\xi_\chi/2) n}\big\}$, because Lemma~\ref{lem:schwartz} yields $\int_\mathbb F R_n(f)\, \Pi(\mathrm df)\geq e^{-\tau n}$ ultimately almost surely for all $\tau>0$, we obtain
    \begin{equation*}
        \int_{B_{\chi,n}} R_n(f)\, \Pi(\mathrm d f)\geq e^{-\tau n}
    \end{equation*}
    infinitely often with positive probability for all $\tau>0$. Moreover, by a simple application of Markov's inequality and the first Borel-Cantelli lemma, we have
    \begin{equation*}
        \int_{B_{\chi,n}} R_n(f)\, \Pi(\mathrm d f)\leq \int_{\mathbb F} R_n(f)\, \Pi(\mathrm d f)\leq e^{\tau n}
    \end{equation*}
    ultimately almost surely for all $\tau>0$. By definition of $B_{\chi,n}$, we also have that
    \begin{equation*}
        e^{(\gamma_\chi - \xi_\chi/2)n}\, \Pi(B_{\chi,n})\leq \int_{B_{\chi,n}} R_n(f)\, \Pi(\mathrm d f)\leq e^{(\gamma_\chi + \xi_\chi/2)n}\, \Pi(B_{\chi,n}).
    \end{equation*}
    Therefore, combining all of the above yields
    \begin{equation*}
        \gamma_\chi - \xi_\chi/2 - \tau_1 \leq -n^{-1}\ln\Pi(B_{\chi,n}) \leq \gamma_\chi + \xi_\chi/2 + \tau_2
    \end{equation*}
    infinitely often with positive probability, for all $\tau_1, \tau_2>0$. Now  $(\gamma_{1/k})_{k\in\NN}$ is a Cauchy sequence contained in the closed set $[\alpha,\beta]$, so it has a limit $\gamma\in[\alpha,\beta]$.
    Defining $\mu_1:=\gamma - (\gamma_\chi-\xi_\chi/2)$ and $\mu_2:=(\gamma_\chi+\xi_\chi/2)-\gamma$, condition \eqref{eq:theorem_alpha_beta_inconsistency} follows.
\end{proof}

% {\color{blue} I think we should look at the case when $\Pi_n$ is sample size dependent and $R_n(f)$ is bounded by $e^{n\tau_n}$ for any $\tau_n\to \infty$. This allows us to introduce an upper bound naturally and the strong connection still holds. 

% The idea is that consistency fails with $R_n(f)\in A_n=(e^{n\gamma_n},e^{n\tau_n})$ i.o. and $\tau_n\to \infty$ and we can also have $\tau_n-\gamma_n\to 0$ since there will be a sequence of $R_n(f)\to\infty$ for consistency to fail, or at least this is the assumption to go beyond $(\alpha,\beta)$ inconsistency.

% Hence, using the usual arguments,
% $$\gamma_n<-n^{-1}\log \Pi_n(f:\,\,-\gamma_n<n^{-1}\log R_n(f)<\tau_n)<\tau_n$$
% on a subsequence.
% }

% {\color{blue} We need a succinct summary of this theorem, something like
% $$
% a<-n^{-1}\log \Pi\left(f:\,\, a<n^{-1}\log R_n(f)<b\right)<b
% $$
% on a subsequence for arbitrary $a<\alpha$ and $b>\beta$. 
% Also, I think $\alpha$ and $\beta$ are not just a single pair, there will be a minimum width pair and then every interval larger than this. 
% }

% {\color{blue} We must have a corollary making a more direct connection to Barron, along lines of there exists a $\tau$ for which
% $$-n^{-1}\log \Pi\{f:|n^{-1}\log R_n(f)-\tau|\to 0\}\to\tau.$$
% Compare this with Barron for which there is a unique $\tau$ for which
% $$-n^{-1}\log \Pi\{f:|n^{-1}\log R_n(f)-\tau|= 0\}\to\tau.$$
% }

The structure uncovered by Theorem~\ref{thm:alpha_beta_inconsistency} is impossible to achieve in genuine statistical inference problems, primarily because it requires knowledge of the true density, which appears both in $n^{-1}\ln R_n(f)$ (a finite-sample version of the unknown entropy number $-\kl(f_\star,f)$) and in the ``positive probability'' statement. Moreover, even assuming (unrealistically) that $f_\star$ is known, encoding this knowledge into the prior in a way that satisfies condition~\eqref{eq:theorem_alpha_beta_inconsistency} is extremely difficult to conceive for two main reasons:
\begin{enumerate}
    \item Condition~\eqref{eq:theorem_alpha_beta_inconsistency} can be loosely but intuitively interpreted as implying that
    \begin{equation*}\label{unreal}
         -n_k^{-1}\ln\Pi\left(\left\{f\in\FF :  n_k^{-1}\ln R_{n_k}(f) \to \gamma \right\} \right) \to \gamma
    \end{equation*}
    along a subsequence $(n_k)_{k\in\NN}$, since the interval length $\chi>0$ can be taken arbitrarily small. This is close in spirit to the extreme case of $\gamma$-inconsistency, where the inner limit is replaced by a finite-$n$ equality. As such, the implications for that case---e.g., that such sets live in low-dimensional subspaces of $\FF$ and would require positive prior mass essentially achievable only through carefully placed atoms---can be translated to this more general setting. %{\color{blue} It is the $\gamma$ appearing on the far right of (\ref{unreal}) which makes the necessary condition for inconsistency impossible to achieve by chance. }

    \item Unless the model is constructed exactly as in Barron’s example, where the log-likelihood ratio persistently hits $\gamma$ exactly, the prior constraint in~\eqref{eq:theorem_alpha_beta_inconsistency} must hold \emph{simultaneously} across all $\chi$. This imposes an additional and extreme level of stringency on the model design required to induce Hellinger inconsistency.
\end{enumerate}

In summary, the two observations above and Theorem~\ref{thm:alpha_beta_inconsistency} itself clarify that, unless one knows the true distribution exactly and designs the prior with extreme ingenuity to satisfy a large number of highly contrived constraints, posterior inconsistency in $\beta$-bounded models is automatically ruled out.

% \vspace{0.2in}
% \noindent
% {\color{blue}
% We can also use a mean value theorem to directly connect the prior $\pi$ and $f^*$. Define
% $\pi_\delta(df)$ to be the prior restricted and normalized to
% $\{d_H(f,f^*)>\epsilon,\,d_W(f,f^*)<\delta\}$.
% Now define
% $\bar{F}_\delta(x)=\int F(x)\,\pi_\delta(df)$ so that
% $\bar{F}_\delta(x)\to F^*(x)$ as $\delta\to 0$. 
% From the mean value theorem,
% $$\frac{\bar{F}_\delta(x+h)-F^*(x+h)-(\bar{F}_\delta(x)-F^*(x))}{h}=\bar{f}_\delta(x')-f^*(x')$$
% for some $x<x'<x+h$. Here $\bar{f}_\delta(x)=\int f(x)\pi_\delta(df)$.
% Assume there is a sequence $\delta\to 0$  and $h\to 0$ for which LHS $\to 0$.
% Then, as $x'\to x$, it follows $\bar{f}_\delta(x)\to f^*(x)$ for all $x$.
% So even though the $f$ are oscillating, integrating over $\pi_\delta$ smooths them out to $f^*$. There is no reason why this should happen for all $x$, actually any $x$, unless $\pi_\delta$ is constructed in a way to get $f^*$ and in which case knowledge of $f^*$ would be needed.
% }

\section{Conclusion}\label{sec:conclusion}

We shed light on the implications of posterior inconsistency in density estimation by precisely characterizing this phenomenon in Barron’s counterexample and for a wider class of models. Our results show that Barron’s construction, though formally valid, depends on an artificially strong alignment between the prior, the likelihood, and the data-generating process—an alignment unattainable in practice without precise knowledge of the true distribution encoded into the prior. Thus, while ingenious, the failure mode underlying the counterexample poses no genuine threat to posterior consistency in realistic settings. Moreover, our notion of $\gamma$-inconsistency, satisfied by the counterexample, isolates the mechanism at work: it emerges as an extreme instance of $(\alpha,\beta)$-inconsistency, which we establish as a necessary consequence of Hellinger inconsistency for a broad class of models that includes Barron’s.

Our analysis, primarily aimed at understanding the counterexample and its implications, is restricted to $\beta$-bounded models. Although this excludes cases where the posterior may not concentrate on bounded normalized likelihood ratios, we note that, as Barron’s $\ln 2$-bounded example shows, inconsistency does not fundamentally rely on unbounded ratios. Rather, it arises from (i) carefully engineered oscillatory behavior approximating the true distribution away from the true density, and (ii) unrealistic prior designs requiring knowledge of the truth. Hence, focusing on $\beta$-bounded models is sufficient to further the current understanding of the core mechanisms underlying inconsistency. This point is reinforced by looking at a recent contribution by \cite{bariletto2025identifiability}, who provide sufficient conditions for parametric posterior consistency and illustrate their theory via the one-dimensional family $f_\theta(x)\propto(1+\cos(\theta x))1_{[0,1]}(x),$ for $\theta\geq 0.$ Using a simple identifiability argument, \cite{bariletto2025identifiability} show that the posterior is consistent at any $\theta>0$ under any full-support prior. Instead, when the data are generated from the uniform distribution (corresponding to both $\theta=0$ and $\theta\to\infty$, due to cosine oscillations), inconsistency may arise. Note that, when $f_\star$ is uniform, the likelihood ratio satisfies $f_\theta(x)/f_\star(x)\leq e^\beta$ uniformly over $\theta\geq 0$ and $x\in[0,1]$ for some finite $\beta>0$, so the model fits within our $\beta$-bounded framework.

\subsection{Future work}

We conclude by noting that promising future directions arise from comparing Barron’s model with the parametric formulation of \cite{bariletto2025identifiability}. In light of Theorem~\ref{thm:alpha_beta_inconsistency}, while the cosine-based model may be inconsistent at $f_\star=f_0$ due to oscillations at infinity, it remains unclear how to construct a prior with full KL support that yields an inconsistent posterior. Indeed, \cite{bariletto2025identifiability} show that consistency at the uniform density holds under extremely mild tail conditions on the prior for $\theta$. We conjecture that this is because, although oscillatory densities weakly targeting $f_0$ exist in the model’s tail, no reasonable (e.g., continuous and eventually decreasing) prior can average them out to recover the true uniform density. In other words, it seems unlikely that a prior or posterior predictive density derived from this model could equal the true density when averaging occurs away from the true parameter. This contrasts with Barron’s model, where the symmetrically increasing oscillations and their uniformly distributed prior mass yield a prior predictive density restricted to $\FF_\triangle$ that is exactly uniform (as one easily checks), and a posterior predictive that, with positive probability, must converge to the uniform density in Hellinger distance (at least along a subsequence), by Theorem~1 in \cite{walker2004newapproaches}. These observations suggest that focusing on this predictive perspective more closely could further clarify the interactions between density oscillations and the prior, and lead to refined results for models of practical interest including non-$\beta$-bounded ones such as nonparametric mixtures.

\bibliography{arXiv.bib}
\bibliographystyle{apalike}

\end{document}